\documentclass[11pt]{article}
\usepackage[T1]{fontenc}
\usepackage{lmodern}
\usepackage[margin=1in]{geometry}
\usepackage{amsmath,amssymb,amsthm,mathtools}
\usepackage{microtype}
\usepackage{etoolbox}
\usepackage[numbers,sort&compress]{natbib}
\usepackage[hidelinks]{hyperref}
\usepackage{fancyhdr}
\hypersetup{pdftitle={A solution to the Erdos Problem \#1040},
  pdfauthor={Ioannis Tzachristas},
  pdfsubject={The capacity-one case of Erdos problem 1040},
  pdfkeywords={logarithmic capacity, polynomial lemniscates, balayage, harmonic measure}}
\newcommand{\C}{\mathbb C}

\newcommand{\D}{\mathbb D}
\DeclareMathOperator{\caplog}{cap}
\DeclareMathOperator{\supp}{supp}
\DeclareMathOperator{\dist}{dist}
\DeclareMathOperator{\Rea}{Re}
\newcommand{\Prob}{\mathcal P}
\newcommand{\qe}{\text{q.e.}}
\newcommand{\areaAE}{\text{a.e.}}
\newtheorem{theorem}{Theorem}[section]
\newtheorem{lemma}[theorem]{Lemma}
\newtheorem{proposition}[theorem]{Proposition}
\newtheorem{corollary}[theorem]{Corollary}
\theoremstyle{remark}
\newtheorem{remark}[theorem]{Remark}
\numberwithin{equation}{section}
\AtBeginEnvironment{thebibliography}{\raggedright}

\title{A solution to the Erd\H{o}s Problem \#1040}
\author{Ioannis Tzachristas}
\date{5 September 2026}

\begin{document}
\maketitle

\begin{abstract}
For a compact set $K\subset\C$, let $\vartheta(K)$ be the infimum of
the planar areas of the unit lemniscates of all monic polynomials with
zeros in $K$, allowing arbitrary degree and repeated zeros.
We prove that $\vartheta(K)=0$ whenever
$\caplog(K)=1$, with no regularity assumption on $K$.
The proof uses a centered harmonic polynomial that is positive on all
but a set of arbitrarily small area in the polynomial hull of $K$.
A Fourier average of exterior harmonic measures realizes this polynomial
as the logarithmic potential of a signed measure having bounded density
with respect to the equilibrium measure. A positive perturbation and
an $L^1$ approximation by empirical measures then produce the required
polynomials. This extends the smooth-boundary result of Krishnapur,
Lundberg, and Ramachandran to arbitrary compact sets of capacity one.
Together with the capacity-greater-than-one theorem of Ghosh and
Ramachandran and an elementary argument for unbounded sets, it follows
that $\vartheta(F)=0$ for every closed infinite set $F\subset\C$
of transfinite diameter at least one, answering the vanishing question
in Erd\H{o}s Problem~1040.
\end{abstract}

\noindent\textit{Keywords.} Polynomial lemniscates; logarithmic capacity;
equilibrium measure; balayage; harmonic measure.\par
\noindent\textit{2020 Mathematics Subject Classification.} 30C10, 31A15.

\section{Introduction and statement of the result}

Write $m$ for planar Lebesgue measure. For a nonempty set $F\subset\C$,
define
\begin{equation}\label{eq:def}
 \vartheta(F)=\inf_{n\ge1}\ \inf_{z_1,\ldots,z_n\in F}
 m\left(\left\{z\in\C:
       \left|\prod_{j=1}^n(z-z_j)\right|<1\right\}\right).
\end{equation}
The points $z_j$ need not be distinct. In particular, every polynomial
in this definition is monic and has all its zeros in the prescribed set.

Erd\H{o}s, Herzog, and Piranian asked whether this infimum is determined
by transfinite diameter, and in particular whether it vanishes for every
closed infinite set of transfinite diameter at least one
\cite[Problem~4, p.~135]{EHP1958}. For compact sets, transfinite diameter
equals logarithmic capacity; we use the notation $\caplog$ for either
quantity. These questions are recorded as Erd\H{o}s problem~1040.
They have distinct answers. A counterexample to determination by
capacity alone is reported by Feng et al.\ \cite[Section~3.3]{Feng2026}.
Ghosh and Ramachandran \cite[Example~2.1]{GR2026} give compact sets
with different values of $\vartheta$ at every prescribed capacity in
$(0,1)$. The present paper addresses the vanishing question at and
above capacity one.

Erd\H{o}s, Herzog, and Piranian already proved vanishing when the
allowed zeros lie on the unit circle
\cite[Theorem~4 and its corollary, p.~133]{EHP1958}. They also proved
that for a closed set of transfinite diameter less than one every
admissible unit lemniscate contains a disk of a fixed positive radius
\cite[Theorem~6, p.~135]{EHP1958}. In particular, the threshold one
in the vanishing question cannot be lowered.

The capacity-greater-than-one case was proved without regularity
assumptions by Ghosh and Ramachandran \cite[Theorem~3.1]{GR2026}.
They also establish the sharp exponential decay rate
\cite[Theorem~3.3]{GR2026}. At capacity one, Krishnapur, Lundberg, and
Ramachandran prove the vanishing assertion when $K$ is the closure of a
bounded open set with $C^2$ boundary \cite[Theorem~6]{KLR2025}.
The introduction of \cite[version~3, 27 August 2026]{GR2026}
explicitly leaves general compact sets of capacity one open.
We prove the remaining compact case without any regularity assumption.

\begin{theorem}\label{thm:main}
Let $K\subset\C$ be compact with $\caplog(K)=1$. For every $a>0$
there are an integer $n\ge1$ and points $z_1,\ldots,z_n\in K$ such that
the monic polynomial $p(z)=\prod_{j=1}^n(z-z_j)$ satisfies
\[
 m\{z\in\C:|p(z)|<1\}<a.
\]
Equivalently, $\vartheta(K)=0$.
\end{theorem}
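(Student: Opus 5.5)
We follow the route announced in the abstract. Let $\mu$ be the equilibrium measure of $K$, let $\widehat K$ be the polynomial hull (the complement of the unbounded component of $\C\setminus K$), and let $g$ be the Green function of the unbounded component $\Omega$ with pole at $\infty$. Since $\caplog(K)=1$, we have $\int\log|z-w|\,d\mu(w)=g(z)\ge0$ everywhere, with equality q.e.\ on $K$ (Frostman), and $g>0$ on $\Omega$. Write $\log|p(z)|=n\int\log|z-w|\,d\nu_n(w)$ with $\nu_n$ the empirical zero measure. It therefore suffices to find, for each $a>0$, zero configurations with $U^{\nu_n}(z):=\int\log|z-w|\,d\nu_n(w)>0$ off a set of area $<a$. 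Since $n U^{\nu_n}$ is then large, the sublevel set $\{|p|<1\}$ is contained in that exceptional set together with a tiny neighbourhood of where $U^{\nu_n}\le 0$.

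The first step handles the part of the plane away from $\widehat K$. Far from $\widehat K$, $U^{\nu_n}\approx g>0$ for $\nu_n\approx\mu$ weakly. The difficulty is that $g$ vanishes on $\widehat K$ and $g\to0$ as one approaches $\partial\Omega$. To fix this, choose a centered harmonic polynomial $h$, that is, the real part of a polynomial $q$ with $q(0)$ adjusted so that $\int h\,d\mu=0$. We require $h>0$ on $\widehat K$ minus a set of area $<a/3$. Such $h$ should exist because the interior components of $\widehat K$ carry area but $h$ is only constrained through its $\mu$-average. For example, take $h=\Rea(c\,(z-z_0)^N)$-type combinations, or more simply use Runge/Mergelyan approximation of a function that is $1$ on most of $\widehat K$ and is balanced by large negative values on a small-area region near $\partial\widehat K$ where $\mu$ lives.

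The second step, the key construction, writes $h$ on $\widehat K$ as a potential $U^\sigma$ of a signed measure $\sigma=f\,\mu$ with $f$ bounded. For $h=\Rea(z^k)$-type terms, one uses the exterior harmonic measures $\omega(\zeta,\cdot)$, $\zeta\in\Omega$, whose potentials equal $\log|z-\zeta|-g(\zeta)$-type expressions on $\widehat K$. A Fourier average of $\omega(Re^{i\theta},\cdot)$ against $e^{-ik\theta}$ over a large circle produces a measure whose potential on $\widehat K$ is a multiple of $\Rea(z^k)$. Because $\omega(\zeta,\cdot)$ has density bounded by $C(\zeta)$ with respect to $\mu=\omega(\infty,\cdot)$ (Harnack), the resulting $f$ is bounded, and centering ensures $\int f\,d\mu=0$. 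Then $\mu_\varepsilon=(1+\varepsilon f)\mu$ is a positive probability measure for small $\varepsilon$, with $U^{\mu_\varepsilon}=g+\varepsilon(\text{potential of }\sigma)$. This is $\ge\varepsilon h>0$ on the good part of $\widehat K$, and it is positive on $\Omega$ outside a thin neighbourhood of $\partial\widehat K$, which has small area once the perturbation's exterior part is controlled by $g$ near infinity.

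The third step discretizes. We approximate $\mu_\varepsilon$ by empirical measures $\nu_n$ supported on $K$, for instance by sampling from $\mu_\varepsilon$ or by partitioning the support into cells of mass $1/n$. Since $\log|\cdot|$ is locally integrable, $U^{\nu_n}\to U^{\mu_\varepsilon}$ in $L^1_{\mathrm{loc}}(m)$ and uniformly far away. By Chebyshev, $U^{\nu_n}>\tfrac12\varepsilon h$ off a set of area $<a/3$, while the remaining exceptional sets each have area $<a/3$. Hence $\{|p|<1\}\subset\{U^{\nu_n}\le 0\}$ has area $<a$.

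I expect the main obstacle to be the second step: proving that the Fourier average of harmonic measures yields a bounded density, and that its potential equals $h$ exactly on all of $\widehat K$, including irregular points where Frostman equality fails only on a polar set. Polar sets have zero area, so such failures are harmless for the area estimate.
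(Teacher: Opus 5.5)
\textbf{Gap in Step 1.} Your Steps 2 and 3 match the paper's argument: the Fourier average of exterior harmonic measures with Harnack domination, the positive perturbation $(1+\varepsilon f)\mu$, and $L^1$ convergence of potentials with a Chebyshev bound. Step 1, however, is a genuine gap, and it is the hardest new ingredient of the proof. You need a harmonic polynomial $h$ with $\int h\,d\mu=0$ and $h>0$ on $\widehat K$ off a set of small area. Neither of your suggestions produces one.

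``Combinations of $\Rea(c(z-z_0)^N)$'' is not an argument. Every harmonic polynomial is such a combination, and each single term is negative on half of every circle about $z_0$.

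The Mergelyan suggestion already fails for the closed unit disk. There $\mu=d\phi/2\pi$. Any $f$ continuous on $\overline{\D}$ and harmonic in $\D$ satisfies $\int f\,d\mu=f(0)$, and such functions are all that uniform limits of harmonic polynomials on $\widehat K$ can be. So centering forces $f(0)=0$, which rules out $f\approx 1$ on $\{|z|\le 1-\epsilon\}$. Hence the set where $h\le 0$ cannot be confined near $\partial\widehat K$. It must reach deep into the interior, and for a general hull with many interior components you have no a priori way to say where.

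Moreover, for arbitrary $K$ the boundary $\partial\widehat K$ can itself have positive area, for instance when $\widehat K$ has empty interior. Then ``a small-area region near $\partial\widehat K$ where $\mu$ lives'' is not available without extra input such as $\mu\perp m$.

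\textbf{How the paper fills it.} The paper builds $h$ by cutting a thin corridor out of $\widehat K$.
\begin{itemize}
\item The Cauchy transform $C_\mu(w)=\int d\mu(z)/(w-z)$ is holomorphic off $\widehat K$ and behaves like $1/w$ at infinity. By Liouville, its Taylor series at a far point $w_0$ has finite radius, hence a boundary singularity $a$.
\item Let $V_\delta$ be the $\delta$-neighbourhood of the segment $[w_0,a]$, and set $E=\widehat K\setminus V_\delta$. Then $m(\widehat K\setminus E)$ is small.
\item Suppose $|\int Q\,d\mu|\le M\sup_E|Q|$ held for all polynomials $Q$. Hahn--Banach and Riesz would give a measure $\lambda$ on $E$ with the same moments as $\mu$. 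Then $C_\lambda=C_\mu$ on $\C\setminus\widehat K$. Since $C_\lambda$ is holomorphic on $V_\delta\ni a$, it would continue the Taylor sum past $a$, a contradiction.
\item So choose $Q$ with $|\int Q\,d\mu|>2\sup_E|Q|$ and set $P=Q/\int Q\,d\mu$. The polynomial $h=\Rea(1-P)$ is centered and exceeds $\tfrac12$ on $E$.
\end{itemize}
For the disk this is just Runge approximation with a slit from outside to the origin. The singularity of the continued Cauchy transform is what tells you where to put the slit for an arbitrary hull.
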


The proof follows the broad strategy of \cite[Section~5]{KLR2025}:
find a centered harmonic function positive on most of the relevant set,
represent it by a controlled signed measure, perturb the equilibrium
measure, and discretize. To handle arbitrary compact sets, we use
three ingredients.
First, a Cauchy-transform singularity and Hahn--Banach argument give a
harmonic polynomial with the required positivity outside a thin corridor.
Second, averaging exterior harmonic measures realizes this polynomial
by a measure whose density is bounded relative to the equilibrium
measure, without assumptions on the boundary. Third, convergence of
logarithmic potentials in planar $L^1$ replaces geometric discretization
of a smooth boundary. All three ingredients are proved below.

The construction is existential. It gives no effective degree bound,
explicit sequence of zeros, or uniform rate of decay in the degree.
Section~\ref{sec:other} combines Theorem~\ref{thm:main} with the
capacity-greater-than-one case and the unbounded case to prove
Corollary~\ref{cor:closed}: every closed infinite set of transfinite
diameter at least one has $\vartheta(F)=0$.

\section{Potential theory and harmonic-measure domination}
\label{sec:prelim}

Write $\Prob(K)$ for the Borel probability measures supported on $K$.
For a compactly supported finite positive measure $\rho$, use the
positive-log convention
\begin{equation}\label{eq:potential}
 U_\rho(z)=\int\log|z-t|\,d\rho(t).
\end{equation}
The same notation is used for a finite signed measure wherever its
positive and negative parts have finite potentials, and elsewhere as
an almost-everywhere defined locally integrable function.
On any bounded planar disk $B$, for $t$ in a fixed compact set $K$,
\begin{equation}\label{eq:log-int}
 \sup_{t\in K}\int_B |\log|z-t||\,dm(z)<\infty.
\end{equation}
Indeed, after translating by $t$, the integration region lies in one
fixed disk, and the logarithmic singularity is integrable there.
Consequently all the potentials used below belong to $L^1(B)$.

Let $K$ be compact and nonpolar, let $H=\widehat K$ be its polynomial
hull, and put $\Omega=\C\setminus H$. Thus $H$ is $K$ together with
the bounded components of $\C\setminus K$; it is compact and full,
meaning that its complement is connected. When discussing harmonic
measure at infinity, we adjoin infinity to $\Omega$ and regard it as
a domain on the Riemann sphere. Write $\mu$ for the equilibrium
probability measure of $K$.

We recall the following standard facts from logarithmic potential
theory; see \cite{Ransford1995,SaffTotik1997} and
\cite[Theorems~1.9 and~1.12, Definition~3.4]{Saff2010}.
One has $\caplog(H)=\caplog(K)$, the equilibrium measures of $H$
and $K$ coincide, and $\supp\mu\subset\partial H\subset K$.
Frostman's theorem and the exterior Green-function identity, with
our sign convention, give
\begin{align}
 U_\mu(z)&=\log\caplog(K) &&\qe\text{ on }H,\label{eq:frostman}\\
 U_\mu(w)&=\log\caplog(K)+g_\Omega(w,\infty)
       &&w\in\Omega.\label{eq:green}
\end{align}
Here $g_\Omega(w,\infty)>0$ for finite $w\in\Omega$.
The abbreviation ``q.e.'' means outside a set of logarithmic capacity
zero. Such sets have planar area zero, so these identities also hold
almost everywhere for $m$.

For finite $w\in\Omega$, let $\nu_w=\omega_\Omega(w,\cdot)$ be
harmonic measure on $\partial H$. It is a probability measure, and
harmonic measure at infinity is $\mu$. Exterior logarithmic balayage
gives
\begin{equation}\label{eq:balayage}
 U_{\nu_w}(z)=\log|z-w|-g_\Omega(w,\infty)
 \qquad\qe\text{ on }H.
\end{equation}
This formula requires no boundary smoothness and holds throughout
$H$, including its interior. In the negative-log convention,
\cite[eqs.~(3.15)--(3.16), pp.~188--189]{Saff2010} states the formula
with the opposite sign for the additive constant. The identification
with harmonic measure is also stated in
\cite[Section~2, eq.~(2.4)]{Tookos2005}.
For capacity one, \eqref{eq:balayage} becomes
\begin{equation}\label{eq:balayage-one}
 U_{\nu_w}(z)=\log|z-w|-U_\mu(w)
 \qquad\qe\text{ on }H.
\end{equation}

We record explicitly the comparison needed to keep the subsequent
perturbation positive.

\begin{lemma}\label{lem:harnack}
Choose $0<r<R$ with $H\subset\{z:|z|<r\}$. For every $|w|=R$,
\begin{equation}\label{eq:domination}
 \nu_w\le C_{r,R}\mu,\qquad C_{r,R}=\frac{R+r}{R-r}.
\end{equation}
In particular, $R=2r$ gives $\nu_w\le3\mu$.
\end{lemma}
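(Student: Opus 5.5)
The plan is to obtain \eqref{eq:domination} from Harnack's inequality, applied at the point at infinity to a family of nonnegative harmonic functions on $\Omega\cup\{\infty\}$. Two measures on $\partial H$ satisfy $\nu_w\le C\mu$ as soon as $\int f\,d\nu_w\le C\int f\,d\mu$ for every continuous $f\ge0$ on $\partial H$. By the Riesz representation theorem and regularity of finite Borel measures, this then extends to all Borel sets. So I fix a continuous $f\ge0$ on $\partial H$ and set
\[
 u_f(z)=\int f\,d\omega_\Omega(z,\cdot),\qquad z\in\Omega\cup\{\infty\}.
\]
This is the Perron--Wiener--Brelot solution of the Dirichlet problem with boundary data $f$. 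It is harmonic on $\Omega\cup\{\infty\}$ and nonnegative, since $f\ge0$ and harmonic measure is positive. No boundary regularity is used: harmonic measure is defined through the PWB solution precisely so that $u_f$ is harmonic even at irregular boundary points.

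The values that matter are $u_f(\infty)=\int f\,d\mu$, because harmonic measure at infinity is $\mu$ as recalled above, and $u_f(w)=\int f\,d\nu_w$. Since $H\subset\{|z|<r\}$, the set $\{|z|>r\}\cup\{\infty\}$ lies in $\Omega\cup\{\infty\}$. Define $v(\zeta)=u_f(r/\zeta)$ for $|\zeta|<1$, with $v(0)=u_f(\infty)$. The inversion $\zeta\mapsto r/\zeta$ is conformal, and a function harmonic in a neighbourhood of $\infty$ is by definition one whose composition with inversion extends harmonically across $0$. Hence $v$ is a nonnegative harmonic function on the unit disk.

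Harnack's inequality on the unit disk gives
\[
 v(\zeta)\le\frac{1+|\zeta|}{1-|\zeta|}\,v(0).
\]
Take $\zeta=r/w$ with $|w|=R$, so that $|\zeta|=r/R<1$. This yields
\[
 \int f\,d\nu_w=u_f(w)\le\frac{R+r}{R-r}\int f\,d\mu .
\]
As $f\ge0$ was an arbitrary continuous function, \eqref{eq:domination} follows, and taking $R=2r$ gives the constant $3$.

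The only delicate point is justifying the identities $u_f(\infty)=\int f\,d\mu$ and the harmonicity of $u_f$ at $\infty$ without any regularity of $\partial H$. Both are exactly the standard facts quoted before the lemma: harmonic measure at infinity equals the equilibrium measure, and PWB solutions are harmonic on the whole domain. So I expect no genuine obstacle beyond citing them carefully.
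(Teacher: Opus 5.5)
Your proposal is correct and follows the paper's proof essentially verbatim. You invert by $\zeta=r/z$ to move $\infty$ to the origin, apply the disk Harnack inequality to the resulting nonnegative harmonic function built from harmonic measure, and evaluate at $\zeta=r/w$. The only difference is that you test against continuous $f\ge0$ and pass to Borel sets via the Riesz representation theorem, whereas the paper works directly with $u_A(w)=\nu_w(A)$ for Borel $A$; both routes are valid.
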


\begin{proof}
For a Borel set $A\subset\partial H$, the function
$u_A(w)=\nu_w(A)$ is bounded, nonnegative, and harmonic in $\Omega$,
with value $\mu(A)$ at infinity. Thus
$v_A(\zeta)=u_A(r/\zeta)$ extends harmonically across $\zeta=0$
to the unit disk, with $v_A(0)=\mu(A)$.
The disk Harnack inequality gives
\[
 v_A(\zeta)\le\frac{1+|\zeta|}{1-|\zeta|}\,v_A(0).
\]
Apply it with $\zeta=r/w$ and $|w|=R$.
The resulting inequality holds for every Borel $A$, which is exactly
the measure inequality \eqref{eq:domination}. The zero function is
included by continuity, or by applying Harnack to $v_A+\epsilon$
and letting $\epsilon\downarrow0$.
\end{proof}

\section{Harmonic polynomials positive outside a thin corridor}
\label{sec:corridor}

The next lemma is independent of potential theory. Its probability
measure need not be an equilibrium measure.

\begin{lemma}\label{lem:corridor}
Let $H\subset\C$ be compact and full, with $m(H)>0$, and let $\mu$
be a probability measure supported on $H$. Given $\eta>0$, there
exist a nonempty compact set $E\subset H$ and a complex polynomial
$P$ such that
\begin{equation}\label{eq:corridor}
 m(H\setminus E)<\eta,\qquad
 \int P\,d\mu=1,\qquad \sup_E|P|<\tfrac12.
\end{equation}
Consequently $h=\Rea(1-P)$ is a real harmonic polynomial satisfying
\begin{equation}\label{eq:centered}
 \int h\,d\mu=0,\qquad h>\tfrac12\text{ on }E.
\end{equation}
\end{lemma}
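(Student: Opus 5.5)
The plan is to reduce \eqref{eq:corridor} to a duality statement and then build the polynomial from a Cauchy kernel whose pole sits inside a thin removed region; I have not closed the duality contradiction, which I flag below as the main obstacle. The consequence \eqref{eq:centered} is immediate once \eqref{eq:corridor} holds. Indeed $\int\Rea(1-P)\,d\mu=1-\Rea\int P\,d\mu=0$, and on $E$ we have $h\ge 1-|P|>\tfrac12$. So everything rests on producing $E$ and $P$.

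\emph{Choice of $E$.} Fix $z_0\in\supp\mu$ and a small radius $s>0$, chosen so that the circle $|z-z_0|=s$ carries no $\mu$-mass; all but countably many radii qualify. Fix also a thin open corridor $W$, a neighbourhood of a polygonal arc running from the disk $D(z_0,s)$ out to $\Omega=\C\setminus H$. Put $G=(D(z_0,s)\cup W)\cap H$ and $E=H\setminus G$. Then $E$ is compact, and taking $s$ and the width of $W$ small makes $m(H\setminus E)=m(G)<\eta$. Because $H$ is full and the corridor joins $D(z_0,s)$ to the unbounded component, $\C\setminus E=\Omega\cup G\cup D(z_0,s)$ is connected. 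Hence $E$ is full, and Runge's theorem applies to functions holomorphic near $E$.

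\emph{Duality.} By Hahn--Banach and the Riesz representation theorem, the polynomial $P$ in \eqref{eq:corridor} fails to exist exactly when the functional $P\mapsto\int P\,d\mu$ is bounded by $2\sup_E|P|$ on polynomials. In that case there is a complex measure $\sigma$ on $E$ with $\|\sigma\|\le2$ and $\int P\,d(\mu-\sigma)=0$ for every polynomial $P$. I would then use the Cauchy transform $F(\zeta)=\int d(\mu-\sigma)(t)/(t-\zeta)$. Since its Laurent coefficients at infinity are moments of $\mu-\sigma$, $F$ vanishes near infinity, hence on all of $\Omega$. Off $E$ we have $\bar\partial F=\pi\mu$ in the sense of distributions, and this holds on the connected open set $\C\setminus E$. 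The aim is to contradict $F\equiv0$ on $\Omega$ by exhibiting a singularity of $F$ inside $D(z_0,s)$ that $\sigma$ cannot cancel.

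\emph{Constructive form.} Concretely, for $\zeta\in D(z_0,s)$ I would use Runge to approximate $c/(t-\zeta)$ uniformly on $E$, within $\tfrac14$, by a polynomial $P_\zeta$. Here $c$ is chosen so that $\sup_E|c/(t-\zeta)|\le c/\dist(\zeta,E)<\tfrac14$. Then I would try to show $\int P_\zeta\,d\mu\approx c\,C\mu(\zeta)$ and normalise. This needs a point $\zeta$ where $|C\mu(\zeta)|\,\dist(\zeta,E)$ is large; for instance $\zeta$ near an atom, or near a point where $\int d\mu(t)/|t-\zeta|$ blows up.

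\emph{Main obstacle.} The hard step is controlling $P$ on $G\cap\supp\mu$. Runge gives no bound for $P$ on the removed set, yet $\mu$ may carry mass there, so $\int P\,d\mu$ is not simply $\int_E P\,d\mu$ plus a small error. My first attempt would be to shrink $G$ around $z_0$ and the corridor so that $\mu(G)\to\mu(\{z_0\})$. I would combine this with the duality formulation applied along a sequence $E_k\uparrow H\setminus\{z_0\}$, passing to a weak-$*$ limit $\sigma$ of the representing measures and using $\|\sigma_k\|\le2$. The contradiction should come from comparing the behaviour of $C\mu$ and $C\sigma$ at $z_0$, which is where the Cauchy-transform singularity mentioned in the introduction would be used. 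If this limiting argument fails, the fallback is to choose $z_0$ to be a Lebesgue density point of $H$ at which $C\mu$ is unbounded, so that a single kernel $1/(t-\zeta)$ already produces the required ratio.
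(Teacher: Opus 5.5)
The derivation of $h$ from $P$ and the Hahn--Banach reformulation are correct, but the gap is more than a missing final step: the choice of $E$ itself is wrong. Removing a small disk around an arbitrary $z_0\in\supp\mu$, joined by a corridor to $\Omega$, does not in general make $Q\mapsto\int Q\,d\mu$ unbounded with respect to $\sup_E|Q|$.

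Take $H=\overline{\D}$ and $\mu=d\phi/(2\pi)$ on the unit circle. Then $\int Q\,d\mu=Q(0)$ for every polynomial. As soon as $0\in E$ you get $|\int Q\,d\mu|\le\sup_E|Q|$, so no admissible $P$ exists, and nothing in your recipe removes $0$. The duality alternative is realized by $\sigma=\delta_0$. Your $F=C(\mu-\delta_0)$ is $0$ outside the disk and $1/\zeta$ inside. Its only non-holomorphy in $\C\setminus E$ is the jump across the circle near $z_0$, which is exactly what the $\bar\partial$-equation prescribes. So there is no singularity at $z_0$ that $\sigma$ ``cannot cancel''.

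The same example defeats your other two routes:
\begin{itemize}
\item The fallback fails because $C\mu$ is $-1/\zeta$ outside and $0$ inside, hence bounded off the circle.
\item The limiting argument along $E_k\uparrow H\setminus\{z_0\}$ cannot give a contradiction. Any weak-$*$ limit of the $\sigma_k$ is a measure on $H$ with the moments of $\mu$, and $\mu$ itself is one.
\end{itemize}
Also, the $\mu$-mass on the removed set is not a nuisance to be shrunk away. If $\mu(H\setminus E)=0$, then $|\int P\,d\mu|\le\sup_E|P|$ for all $P$. So the lemma forces the removed set to carry mass on which $P$ is large.

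The missing idea is to read your own reformulation the right way round. The alternative says that $C\sigma$ is holomorphic on $\C\setminus E$ and agrees with $C\mu$ on $\Omega$. In other words, the \emph{exterior branch} $C\mu|_\Omega$ continues analytically into the removed region. So the corridor must reach a point beyond which this exterior branch cannot be continued. Such a point need not lie in $\supp\mu$; in the example it is $0$.

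The paper produces it by Liouville. Since $C_\mu(w)=\int d\mu(z)/(w-z)=w^{-1}+O(w^{-2})$, the Taylor series of $C_\mu$ at a far point $w_0$ has finite radius $R_0$. Otherwise its sum would be entire, tend to $0$ at infinity, and vanish identically. Hence the series has a singular point $a$ on $\partial B(w_0,R_0)$.

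One then removes the $\delta$-neighbourhood $V_\delta$ of the segment $[w_0,a]$, which costs little area. Suppose the functional were bounded on $E=H\setminus V_\delta$, with representing measure $\lambda$ on $E$. Then $C_\lambda$ is holomorphic on $V_\delta$ and equals $C_\mu$ near $w_0$. Since $V_\delta\cap B(w_0,R_0)$ is convex, hence connected, $C_\lambda$ coincides there with the Taylor sum and continues it across $a$, a contradiction. This needs neither Runge's theorem, nor fullness of $E$, nor any control of $P$ on the removed set.
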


\begin{proof}
Consider the Cauchy transform
\[
 C_\mu(w)=\int\frac{d\mu(z)}{w-z},\qquad w\in\C\setminus H.
\]
It is holomorphic there and satisfies
$C_\mu(w)=w^{-1}+O(w^{-2})$ near infinity. Fix $w_0$ outside a
large disk containing $H$, and consider the Taylor series of $C_\mu$
at $w_0$.

Its radius of convergence $R_0$ is finite and positive. Otherwise its
sum would be an entire function agreeing with $C_\mu$ on the connected
set $\C\setminus H$, by the identity theorem. This entire function
would tend to zero at infinity and hence would be identically zero by
Liouville's theorem, contradicting the coefficient of $w^{-1}$.

Let $T$ denote the Taylor sum on the disk
$D=B(w_0,R_0)$. There is a point $a\in\partial D$ at which $T$
does not extend holomorphically across the boundary. This is the usual
boundary-singularity property of a Taylor disk with finite radius of
convergence: extensions at every boundary point would, by a finite
cover of the boundary circle, extend the Taylor sum to a larger disk.

Set $\gamma=[w_0,a]$. For $\delta>0$, let
\[
 V_\delta=\{z:\dist(z,\gamma)<\delta\},\qquad
 E=H\setminus V_\delta.
\]
Since a line segment has planar area zero, choose $\delta>0$ such that
\[
 m(H\setminus E)<\min\{\eta,m(H)/2\}.
\]
Then $E$ is compact and has positive area.

Define the complex-linear functional
$\Lambda(Q)=\int Q\,d\mu$ on complex polynomials. We claim that
there is no finite $M$ such that
\begin{equation}\label{eq:bounded-functional}
 |\Lambda(Q)|\le M\sup_E|Q|\quad\text{for every polynomial }Q.
\end{equation}
Suppose otherwise. Then $\Lambda$ is a bounded functional on the
polynomial restrictions in $C(E)$. Complex Hahn--Banach and the Riesz
representation theorem give a finite complex Borel measure $\lambda$
supported on $E$ such that
\[
 \int Q\,d\lambda=\int Q\,d\mu
 \quad\text{for all complex polynomials }Q.
\]
Their analytic moments coincide, so their Cauchy transforms have the
same Laurent expansion near infinity. Hence $C_\lambda=C_\mu$
near infinity and, by the identity theorem, throughout $\C\setminus H$.
In particular they agree near $w_0$.

Since $E$ is disjoint from $V_\delta$, the function $C_\lambda$
is holomorphic on $V_\delta$. Both $D$ and $V_\delta$ are convex,
so $D\cap V_\delta$ is connected. The identity theorem now gives
$C_\lambda=T$ on this intersection. But $V_\delta$ contains an open
neighborhood of $a$. Thus $C_\lambda$ extends $T$ across $a$, a
contradiction. This proves the claim.

Choose $Q$ for which $|\Lambda(Q)|>2\sup_E|Q|$ and set
$P=Q/\Lambda(Q)$. This gives \eqref{eq:corridor}, and
\eqref{eq:centered} follows by taking real parts.
\end{proof}

\begin{remark}
The removed corridor is allowed to pass through $H$. There is no
assertion that $a\in H$, and the proof does not require $E$ to be full
or polynomially convex. These distinctions avoid making any assumption
about continuation of $C_\mu$ through the original support.
\end{remark}

\section{Realization by a controlled signed measure}
\label{sec:realization}

Return to a compact $K$ with $\caplog(K)=1$, its hull $H$, and its
equilibrium measure $\mu$.

\begin{lemma}\label{lem:realization}
Let $h$ be a real harmonic polynomial with $\int h\,d\mu=0$.
There is a finite real signed measure $\sigma$ supported on $K$ such that
\begin{equation}\label{eq:realization}
 \sigma(K)=0,\qquad |\sigma|\le C_h\mu,\qquad
 U_\sigma=h\quad m\text{-}\areaAE\text{ on }H
\end{equation}
for some finite constant $C_h$.
\end{lemma}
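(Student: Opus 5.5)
The plan is to build $\sigma$ as a Fourier average of the harmonic measures $\nu_w$ over a circle $|w|=R$, using the balayage identity \eqref{eq:balayage-one} together with the expansion of $\log|z-w|$ in powers of $z/w$, and to get the bound $|\sigma|\le C_h\mu$ from Lemma~\ref{lem:harnack}. Fix $r$ with $H\subset\{|z|<r\}$ and put $R=2r$. For $|z|<r$ and $w=Re^{i\theta}$ we have
\[
 \log|z-w|=\log R-\Rea\sum_{k\ge1}\frac1k\Bigl(\frac{z}{R}\Bigr)^k e^{-ik\theta},
\]
and the series converges uniformly in $\theta$. Multiplying by $e^{ik\theta}$ and averaging over $\theta$ isolates $-\tfrac1{2k}(z/R)^k$ for each $k\ge1$.

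For $k\ge1$ I would define the complex measure
\[
 \sigma_k=-2kR^k\int_0^{2\pi}\nu_{Re^{i\theta}}\,e^{ik\theta}\,\frac{d\theta}{2\pi}.
\]
Here $\theta\mapsto\nu_{Re^{i\theta}}(A)$ is continuous for each Borel $A$, since it is harmonic in $w$, so $\sigma_k$ is a well-defined measure on $\partial H\subset K$. It has total mass $-2kR^k\int e^{ik\theta}\,d\theta/2\pi=0$. Lemma~\ref{lem:harnack} gives $|\sigma_k|\le 2kR^k\cdot 3\mu$.

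Next I would integrate \eqref{eq:balayage-one} against $e^{ik\theta}\,d\theta/2\pi$ and apply Fubini. This is justified because $\nu_w\le3\mu$ and $\int|\log|z-t||\,d\mu(t)$ is finite and locally uniformly integrable. The result is
\[
 U_{\sigma_k}(z)=z^k+c_k\quad\text{on }H\text{ off an exceptional set},
\]
where $c_k=2kR^k\int U_\mu(Re^{i\theta})e^{ik\theta}\,d\theta/2\pi$.

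Writing $h=a_0+\sum_{k=1}^N\Rea(a_kz^k)$, I would set $\sigma=\Rea\sum_k a_k\sigma_k$. This is a real signed measure on $K$ with $\sigma(K)=0$ and $|\sigma|\le C_h\mu$, where $C_h=6\sum_k k|a_k|R^k$. It satisfies $U_\sigma=h+c$ off the exceptional set, for some real constant $c$.

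To show $c=0$, I would integrate against $\mu$ and use Fubini: $\int U_\sigma\,d\mu=\int U_\mu\,d\sigma$. Since $|\sigma|\ll\mu$ and $\mu$ has finite energy, $\mu$ charges no polar set, so \eqref{eq:frostman} gives $U_\mu=0$ $\mu$-a.e. and hence $|\sigma|$-a.e. Thus $\int U_\sigma\,d\mu=0$. If the identity $U_\sigma=h+c$ also holds $\mu$-a.e., then $0=\int h\,d\mu+c=c$. The identity then holds $m$-a.e. on $H$, since polar sets have area zero.

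The main obstacle is the exceptional set. In \eqref{eq:balayage-one} it a priori depends on $w$, and an uncountable average of q.e. identities need not hold q.e. I would first try to use the sharper standard fact that the balayage identity holds at every point of $H$ except the irregular boundary points of $\Omega$. This set is a single $F_\sigma$ polar set independent of $w$, by Kellogg's theorem, which fixes the problem at once. As a fallback, for $m$-a.e.\ I would apply Fubini on $H\times[0,2\pi]$, since each $w$-exceptional set has area zero. For the $\mu$-a.e.\ statement needed for $c$, I would apply Fubini on $\partial H\times[0,2\pi]$ with respect to $\mu\otimes d\theta$. This uses only that $\mu$ charges no polar set, so each $w$-exceptional set is $\mu$-null.
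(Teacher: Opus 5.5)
Your proposal is correct and is essentially the paper's proof: $\Rea\sum_k a_k\sigma_k$ is exactly the paper's $\sigma=\int_0^{2\pi} b(\theta)\,\nu_{w_\theta}\,\frac{d\theta}{2\pi}$ with $b(\theta)=-2\sum_k kR^k\Rea(a_ke^{ik\theta})$, justified by the same Harnack domination, Fubini argument, and Fourier orthogonality. The only difference is how the additive constant is fixed. The paper expands $U_\mu(Re^{i\theta})=\log R-\Rea\sum_j m_j e^{-ij\theta}/(jR^j)$, which gives your $c_k=-m_k$ directly and so needs the identity only $m$-a.e.; your reciprocity argument $\int U_\sigma\,d\mu=\int U_\mu\,d\sigma=0$ also works, but it requires the extra $\mu$-a.e.\ version of the identity.
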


\begin{proof}
Write $m_k=\int z^k\,d\mu(z)$. The centered condition allows us to
write
\begin{equation}\label{eq:h-expansion}
 h(z)=\Rea\sum_{k=1}^d a_k(z^k-m_k).
\end{equation}
If $h=0$, take $\sigma=0$. Otherwise choose $R$ larger than the
moduli of all points of $H$, and put $w_\theta=Re^{i\theta}$.
The uniformly convergent logarithmic expansion and
\eqref{eq:balayage-one} give, for each $\theta$,
\begin{equation}\label{eq:swept-expansion}
 U_{\nu_{w_\theta}}(z)
 =-\Rea\sum_{j=1}^{\infty}
       \frac{z^j-m_j}{jR^j}e^{-ij\theta}
 \quad m\text{-}\areaAE\text{ on }H.
\end{equation}
The series on the right converges uniformly for $(z,\theta)\in
H\times[0,2\pi]$.

Define the real trigonometric polynomial
\begin{equation}\label{eq:fourier-weight}
 b(\theta)=-2\sum_{k=1}^d kR^k\Rea(a_ke^{ik\theta}),
\end{equation}
and the real signed measure
\begin{equation}\label{eq:sigma}
 \sigma(A)=\int_0^{2\pi} b(\theta)\nu_{w_\theta}(A)
                          \frac{d\theta}{2\pi}.
\end{equation}
Harmonic measure is a Borel probability kernel in its pole, so this
integral defines a countably additive finite signed measure. For
example, countable additivity follows from dominated convergence
applied to disjoint unions, since $b$ is integrable and each
$\nu_{w_\theta}$ is a probability. Its total mass is zero because
$b$ has zero mean. Its support lies in $\partial H\subset K$.

By Lemma~\ref{lem:harnack}, for a constant $C_R$ independent of
$\theta$, one has $\nu_{w_\theta}\le C_R\mu$. Therefore
\begin{equation}\label{eq:variation-bound}
 |\sigma|(A)\le C_R\left(\int_0^{2\pi}|b(\theta)|
                       \frac{d\theta}{2\pi}\right)\mu(A).
\end{equation}
To see the total-variation inequality, first bound $|\sigma(A_j)|$
for each set in a finite Borel partition of $A$, sum, and then take
the supremum over such partitions.

We may integrate \eqref{eq:swept-expansion} against
$b(\theta)d\theta/(2\pi)$ and interchange the logarithmic integrations.
Indeed, for any bounded disk $B$ containing $H$,
\[
 \int_0^{2\pi}\!|b(\theta)|
 \int_B\!\int |\log|z-t||\,d\nu_{w_\theta}(t)\,dm(z)
 \frac{d\theta}{2\pi}<\infty
\]
by domination and \eqref{eq:log-int}. Fubini also deals with the
exceptional sets in \eqref{eq:swept-expansion}: they may depend on
$\theta$, but each has planar area zero. No common exceptional set
for all poles is required.

For $j,k\ge1$, Fourier orthogonality gives
\[
 \int_0^{2\pi}\Rea(a_ke^{ik\theta})
            \Rea(v_je^{-ij\theta})\frac{d\theta}{2\pi}
 =\begin{cases}
   \tfrac12\Rea(a_kv_k),&j=k,\\
   0,&j\ne k.
  \end{cases}
\]
Equations \eqref{eq:swept-expansion}--\eqref{eq:sigma} consequently
yield $U_\sigma=h$ almost everywhere on $H$. Together with
\eqref{eq:variation-bound}, this proves \eqref{eq:realization}.
\end{proof}

\section{From probability measures to monic polynomials}
\label{sec:discretization}

The next lemma uses planar area, rather than a measure on the boundary.
It is valid for every compact set, including sets with irregular or
positive-area boundary.

\begin{lemma}\label{lem:L1}
Let $K\subset\C$ be compact, and let $\rho_j,\rho\in\Prob(K)$
be probability measures with $\rho_j\to\rho$ weakly. For every
bounded disk $B\subset\C$,
\[
 \|U_{\rho_j}-U_\rho\|_{L^1(B)}\longrightarrow0.
\]
\end{lemma}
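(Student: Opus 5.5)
The plan is a truncation of the logarithmic kernel, splitting off a bounded continuous part where weak convergence applies directly. For $\epsilon\in(0,1)$ write $\log|z-t|=\ell_\epsilon(z-t)+r_\epsilon(z-t)$, where $\ell_\epsilon(u)=\log\max\{|u|,\epsilon\}$ and $r_\epsilon(u)=\log(|u|/\epsilon)\mathbf 1_{\{|u|<\epsilon\}}\le0$. Then
\[
 U_{\rho_j}(z)-U_\rho(z)=\int\ell_\epsilon(z-t)\,d(\rho_j-\rho)(t)
 +\int r_\epsilon(z-t)\,d\rho_j(t)-\int r_\epsilon(z-t)\,d\rho(t),
\]
valid for $m$-a.e.\ $z\in B$, since all three integrals are finite almost everywhere by \eqref{eq:log-int} and Fubini. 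I would bound the $L^1(B)$ norm of each of the three terms separately.

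\emph{Remainder terms.} By Tonelli,
\[
 \int_B\int|r_\epsilon(z-t)|\,d\rho_j(t)\,dm(z)
 \le\sup_{t\in K}\int_{|u|<\epsilon}\log\frac{\epsilon}{|u|}\,dm(u)
 =\frac{\pi\epsilon^2}{2}.
\]
This bound is uniform in $j$, and the same bound holds for $\rho$. Only the fact that each measure is a probability is used, so no regularity of $K$ enters.

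\emph{Continuous term.} The function $t\mapsto\ell_\epsilon(z-t)$ is continuous and bounded on $K$, uniformly for $z\in B$. Hence weak convergence gives $F_j(z):=\int\ell_\epsilon(z-t)\,d(\rho_j-\rho)(t)\to0$ for every $z\in B$. Moreover $|F_j(z)|\le2\sup_{z\in B,t\in K}|\ell_\epsilon(z-t)|<\infty$, so dominated convergence on the bounded disk $B$ gives $\|F_j\|_{L^1(B)}\to0$. One can alternatively get uniform convergence from the equicontinuity of $\{\ell_\epsilon(z-\cdot)\}_{z\in B}$, since $\ell_\epsilon$ is Lipschitz with constant $1/\epsilon$, but pointwise convergence suffices.

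Combining the three bounds gives $\limsup_j\|U_{\rho_j}-U_\rho\|_{L^1(B)}\le\pi\epsilon^2$, and letting $\epsilon\downarrow0$ finishes the proof. The main point requiring care is not an estimate but bookkeeping: the potentials may take the value $-\infty$ on sets of area zero. The decomposition therefore has to be interpreted almost everywhere, with the finiteness of $\int_B U_{\rho_j}$ and of $\int_B U_\rho$ justified via \eqref{eq:log-int}. The uniformity of the remainder bound in $t\in K$, and hence in $j$, is the crucial step, and it holds because the bound depends only on the total mass of each measure.
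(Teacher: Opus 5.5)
Your proposal is correct and follows essentially the same route as the paper. Your $\ell_\epsilon$ is the paper's truncated kernel $k_L$ with $\epsilon=e^{-L}$, and your uniform remainder bound $\pi\epsilon^2/2$ is exactly the paper's tail estimate $\frac{\pi}{2}e^{-2L}$; as in the paper, you combine it with weak convergence plus bounded convergence for the truncated part and the triangle inequality.
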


\begin{proof}
For $L>0$, set
$k_L(z,t)=\max\{\log|z-t|,-L\}$, taking the value $-L$ at $z=t$.
For fixed $L$, this is continuous and bounded on $\overline B\times K$.
Weak convergence gives pointwise convergence of the corresponding
truncated potentials, and bounded convergence gives their convergence
in $L^1(B)$.

For any probability measure $\tau$ on $K$, Tonelli's theorem gives
the uniform truncation estimate
\begin{align}
 &\int_B\!\int
       \bigl(k_L(z,t)-\log|z-t|\bigr)\,d\tau(t)\,dm(z)\notag\\
 &\hspace{1cm}\le
   \int_{|u|<e^{-L}}(-L-\log|u|)\,dm(u)
   =\frac{\pi}{2}e^{-2L}.\label{eq:tail}
\end{align}
The triangle inequality, followed first by $j\to\infty$ and then
by $L\to\infty$, proves the assertion.
\end{proof}

\begin{proposition}\label{prop:discrete}
For compact $K\subset\C$ and $\rho\in\Prob(K)$,
\begin{equation}\label{eq:discrete-area}
 \vartheta(K)\le m\{z\in\C:U_\rho(z)\le0\}.
\end{equation}
\end{proposition}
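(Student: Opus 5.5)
We approximate $\rho$ weakly by normalized counting measures of points of $K$, and use Lemma~\ref{lem:L1} to pass from $L^1$ convergence of potentials to convergence in measure. For a monic polynomial $p(z)=\prod_{j=1}^n(z-z_j)$ with empirical measure $\rho_n=\frac1n\sum_j\delta_{z_j}$, we have $\frac1n\log|p(z)|=U_{\rho_n}(z)$. Hence $\{|p|<1\}=\{U_{\rho_n}<0\}$, and it suffices to find empirical measures $\rho_n\in\Prob(K)$ with
\[
 \limsup_n m\{U_{\rho_n}<0\}\le m\{U_\rho\le0\}.
\]

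\textbf{Step 1 (discretization).} Choose empirical measures $\rho_n$ supported on $K$ with $\rho_n\to\rho$ weakly. For instance, partition $K$ into finitely many Borel pieces of diameter $<1/n$ and pick a point of $K$ in each piece. Give each point weight equal to the $\rho$-mass of its piece, rounded to a multiple of $1/N$ for large $N$ so that the masses still sum to one. Repeated points are allowed, so these are exactly the zero-counting measures of admissible polynomials.

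\textbf{Step 2 (restriction to a disk).} Fix a disk $B$ containing $K$ in its interior. Outside a large disk $B'\supset B$, every $\rho_n$ satisfies $U_{\rho_n}(z)\ge\log(|z|-\max_K|t|)>0$. So all sublevel sets $\{U_{\rho_n}<0\}$, as well as $\{U_\rho\le 0\}$, lie in the fixed bounded disk $B'$. By Lemma~\ref{lem:L1}, $U_{\rho_n}\to U_\rho$ in $L^1(B')$.

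\textbf{Step 3 (convergence in measure).} For $\epsilon>0$, split
\[
 \{U_{\rho_n}<0\}\subset\{U_\rho<\epsilon\}\cup\{|U_{\rho_n}-U_\rho|\ge\epsilon\}.
\]
Chebyshev's inequality and Step 2 give
\[
 m\bigl(B'\cap\{|U_{\rho_n}-U_\rho|\ge\epsilon\}\bigr)\le\epsilon^{-1}\|U_{\rho_n}-U_\rho\|_{L^1(B')}\to0 .
\]
Hence $\vartheta(K)\le \limsup_n m\{U_{\rho_n}<0\}\le m(B'\cap\{U_\rho<\epsilon\})$. Letting $\epsilon\downarrow0$, the sets $\{U_\rho<\epsilon\}\cap B'$ decrease to $\{U_\rho\le0\}\cap B'$. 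Continuity from above for finite measure then yields \eqref{eq:discrete-area}.

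The main point needing care is Step 3's passage $\epsilon\downarrow0$, which is why the bound has the non-strict inequality $U_\rho\le0$. A second point is that $U_\rho$ may equal $-\infty$ on a polar set; this is harmless because such sets are contained in $\{U_\rho\le0\}$ and the argument is purely measure-theoretic. The weak approximation in Step 1 is routine.
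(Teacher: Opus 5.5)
Your proposal is correct and follows essentially the same route as the paper. Both arguments approximate $\rho$ weakly by empirical measures on $K$, confine all sublevel sets to one fixed disk, invoke Lemma~\ref{lem:L1} for $L^1$ convergence of the potentials, and apply a Chebyshev split at a threshold that is then sent to zero. The only differences are cosmetic: you use $\{U_\rho<\epsilon\}$ where the paper uses $\{U_\rho\le s\}$, and you state explicitly the continuity-from-above step that the paper leaves implicit.
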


\begin{proof}
Choose equally weighted empirical probability measures
\[
 \rho_N=\frac1N\sum_{j=1}^N\delta_{z_{j,N}},\qquad z_{j,N}\in K,
\]
converging weakly to $\rho$, with $N\to\infty$ along a sequence.
For completeness, partition $K$ into finitely many Borel sets of
diameter tending to zero, approximate their masses by nonnegative
rational numbers with a common denominator and total mass one, and
place the prescribed multiplicities at a point of each nonempty
partition cell. Taking the total error in the masses to zero produces
the stated approximation. Increasing denominators if necessary makes
$N\to\infty$.

Set $p_N(z)=\prod_{j=1}^N(z-z_{j,N})$. Then
$N^{-1}\log|p_N|=U_{\rho_N}$, with the usual value $-\infty$ at
zeros. If $K\subset\{|t|\le r\}$, every probability measure
$\tau$ on $K$ satisfies
\begin{equation}\label{eq:bounding-disk}
 U_\tau(z)>0\qquad(|z|>r+1).
\end{equation}
Choose one bounded disk $B$ containing $\{|z|\le r+1\}$.
It contains all the unit lemniscates under consideration.

For $s>0$, the part of $\{U_{\rho_N}<0\}$ on which $U_\rho>s$
has area at most $s^{-1}\|U_{\rho_N}-U_\rho\|_{L^1(B)}$.
Thus
\[
 m\{U_{\rho_N}<0\}
 \le m\bigl(B\cap\{U_\rho\le s\}\bigr)
       +s^{-1}\|U_{\rho_N}-U_\rho\|_{L^1(B)}.
\]
By Lemma~\ref{lem:L1}, first letting $N\to\infty$ and then
$s\downarrow0$ yields
\[
 \limsup_N m\{|p_N|<1\}
 \le m\bigl(B\cap\{U_\rho\le0\}\bigr)
 =m\{U_\rho\le0\}.
\]
This proves \eqref{eq:discrete-area}.
\end{proof}

\begin{remark}
The limiting set in \eqref{eq:discrete-area} is the nonpositive
sublevel set. No assertion about the area of the zero level set is
needed. This distinction is useful because the equilibrium potential
at capacity one can vanish on a set of positive area.
\end{remark}

\section{Proof of the capacity-one theorem}
\label{sec:proof}

\begin{proof}[Proof of Theorem~\ref{thm:main}]
Let $H=\widehat K$ and let $\mu$ be the equilibrium measure of $K$.
If $m(H)=0$, then $U_\mu>0$ outside $H$, so
$m\{U_\mu\le0\}=0$. Proposition~\ref{prop:discrete} gives
$\vartheta(K)=0$ immediately.

Suppose $m(H)>0$ and fix $\eta>0$.
Lemma~\ref{lem:corridor} gives $E\subset H$ and a real harmonic
polynomial $h$ such that
\[
 m(H\setminus E)<\eta,\qquad
 \int h\,d\mu=0,\qquad h>\tfrac12\text{ on }E.
\]
By Lemma~\ref{lem:realization}, choose $\sigma$ with
$\sigma(K)=0$, $|\sigma|\le C_h\mu$, and $U_\sigma=h$
almost everywhere on $H$. For
$0<t<[2(1+C_h)]^{-1}$, define
\begin{equation}\label{eq:perturb}
 \rho_t=\mu+t\sigma.
\end{equation}
The measure $\rho_t$ is a probability supported on $K$: its density
relative to $\mu$ is at least $1-tC_h>1/2$, and its mass is one.
Since $U_\mu=0$ almost everywhere on $H$,
\[
 U_{\rho_t}=t h>t/2\quad m\text{-}\areaAE\text{ on }E.
\]
Consequently
\begin{equation}\label{eq:inside}
 m\bigl(H\cap\{U_{\rho_t}\le0\}\bigr)<\eta.
\end{equation}

For each finite $z\in\Omega=\C\setminus H$, the potential
$U_\sigma(z)$ is finite and
$U_\mu(z)=g_\Omega(z,\infty)>0$. Hence
\[
 U_{\rho_t}(z)=U_\mu(z)+tU_\sigma(z)
 \longrightarrow U_\mu(z)>0\qquad(t\downarrow0).
\]
Every nonpositive sublevel set in this family lies in one bounded disk
by \eqref{eq:bounding-disk}. Dominated convergence applied to its
intersection with $\Omega$ therefore gives
\begin{equation}\label{eq:outside}
 m\bigl(\Omega\cap\{U_{\rho_t}\le0\}\bigr)
 \longrightarrow0\qquad(t\downarrow0).
\end{equation}
Choose $t$ small enough that the left-hand side is less than $\eta$.
Combining \eqref{eq:inside} and \eqref{eq:outside}, we obtain
$m\{U_{\rho_t}\le0\}<2\eta$.

Proposition~\ref{prop:discrete} now gives $\vartheta(K)<2\eta$.
Since $\eta>0$ was arbitrary and $\vartheta(K)\ge0$, the theorem
follows. Equivalently, given $a>0$, first take $2\eta<a$ and then
choose an empirical approximation far enough along the sequence to
obtain a polynomial with unit-lemniscate area less than $a$.
\end{proof}

\section{Completion of the vanishing question}
\label{sec:other}

For completeness, we first recover the qualitative
capacity-greater-than-one theorem of Ghosh and Ramachandran
\cite[Theorem~3.1]{GR2026} from Proposition~\ref{prop:discrete}.

\begin{proposition}\label{prop:greater}
If $K\subset\C$ is compact and $\caplog(K)>1$, then
$\vartheta(K)=0$.
\end{proposition}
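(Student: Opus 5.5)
By Proposition~\ref{prop:discrete}, it suffices to exhibit a single probability measure $\rho\in\Prob(K)$ with $m\{U_\rho\le0\}=0$. The natural candidate is the equilibrium measure $\mu$ of $K$. The only task is then to show that its potential is strictly positive almost everywhere in the plane.

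First I apply the potential-theoretic facts recalled in Section~\ref{sec:prelim}. Let $H=\widehat K$ and $\Omega=\C\setminus H$. By Frostman's theorem \eqref{eq:frostman}, $U_\mu=\log\caplog(K)$ quasi-everywhere on $H$, and this value is $>0$ because $\caplog(K)>1$. Sets of capacity zero have planar area zero, so $U_\mu>0$ holds $m$-almost everywhere on $H$. On $\Omega$, the Green identity \eqref{eq:green} gives
\[
 U_\mu(w)=\log\caplog(K)+g_\Omega(w,\infty)>0
\]
for every finite $w\in\Omega$, since both terms are positive. Combining the two regions, $m\{U_\mu\le0\}=0$.

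Applying \eqref{eq:discrete-area} with $\rho=\mu$ then yields $\vartheta(K)\le0$. Since $\vartheta(K)\ge0$ trivially, $\vartheta(K)=0$.

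There is no real obstacle here. The one point to check is that $K$ is nonpolar, which is what makes the equilibrium measure and the identities \eqref{eq:frostman}--\eqref{eq:green} available. This holds automatically, because $\caplog(K)>1>0$.
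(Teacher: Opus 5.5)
Your proposal is correct and follows essentially the same route as the paper. Both arguments apply Proposition~\ref{prop:discrete} to the equilibrium measure $\mu$. Its potential equals $\log\caplog(K)>0$ quasi-everywhere (hence $m$-a.e.) on the hull, and equals that constant plus the positive Green function on $\Omega$, so $m\{U_\mu\le0\}=0$.
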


\begin{proof}
The equilibrium potential equals $\log\caplog(K)>0$ quasi-everywhere
on its hull and equals this constant plus the positive exterior Green
function outside the hull. Its nonpositive set therefore has planar
area zero. Apply Proposition~\ref{prop:discrete}.
\end{proof}

The sharp exponential decay rate established in
\cite[Theorem~3.3]{GR2026} is a stronger conclusion; the argument here
recovers only the vanishing of the infimum.

\begin{proposition}\label{prop:unbounded}
If $F\subset\C$ is unbounded, then $\vartheta(F)=0$.
\end{proposition}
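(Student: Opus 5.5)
Since $F$ is unbounded, we can pick zeros at enormous distance and use a polynomial of degree two. We choose $z_1,z_2\in F$ with $|z_1|$ very large and $|z_2|$ much larger still, and set $p(z)=(z-z_1)(z-z_2)$. We then show directly that the lemniscate $\{|p|<1\}$ has small area.

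Our first choice was a single far zero, but that fails: for $p(z)=z-z_1$ the set $\{|p|<1\}$ is a unit disk, of area $\pi$. The second zero has to be much farther away, so that the factor $|z-z_2|$ is huge near $z_1$ and shrinks the lemniscate there.

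Let $D=|z_1-z_2|$. Every point of $\{|p|<1\}$ lies within distance $1$ of $z_1$ or of $z_2$, since the product of the two distances is less than $1$. Suppose $D\ge 4$ and $|z-z_1|\le 1$. Then $|z-z_2|\ge D-1\ge D/2$, so $|z-z_1|<2/D$. By symmetry the same bound holds near $z_2$. Hence
\[
 \{|p|<1\}\subset B(z_1,2/D)\cup B(z_2,2/D),
\]
and the area is at most $8\pi/D^2$.

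Given $a>0$, we take $D>\sqrt{8\pi/a}$ and $D\ge4$. Such a pair exists because $F$ is unbounded: fix any $z_1\in F$ and choose $z_2\in F$ with $|z_2|>|z_1|+D$.

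This gives $\vartheta(F)\le 8\pi/D^2$ for every such $D$, so $\vartheta(F)=0$. The argument does not use Proposition~\ref{prop:discrete} or any potential theory. The only step requiring care is the separation estimate that yields the radius $2/D$.
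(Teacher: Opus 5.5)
Your proof is correct and is essentially the paper's argument: take two points of $F$ at mutual distance $D$, note that every point of the lemniscate lies within distance less than $1$ of one of them, and hence within $1/(D-1)$ of it (you use the slightly cruder $2/D$), which gives area $O(D^{-2})$. One cosmetic point: your choice of $z_2$ actually gives $|z_1-z_2|>D$ rather than $|z_1-z_2|=D$, which only improves the bound.
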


\begin{proof}
Choose $a,b\in F$ at distance $D>2$ and set $p(z)=(z-a)(z-b)$.
If $|p(z)|<1$, the smaller of $|z-a|$ and $|z-b|$ is less than one.
The larger is therefore greater than $D-1$, so the smaller is less
than $(D-1)^{-1}$. Thus
\[
 \{|p|<1\}\subset
 B\bigl(a,(D-1)^{-1}\bigr)\cup B\bigl(b,(D-1)^{-1}\bigr),
\]
and $m\{|p|<1\}\le2\pi(D-1)^{-2}$. Let $D\to\infty$.
\end{proof}

\begin{corollary}\label{cor:closed}
For every closed infinite set $F\subset\C$ of transfinite diameter
at least one, $\vartheta(F)=0$.
\end{corollary}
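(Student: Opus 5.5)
The plan is a case split according to whether $F$ is bounded, reducing each case to an earlier result. If $F$ is unbounded, Proposition~\ref{prop:unbounded} gives $\vartheta(F)=0$ directly, with no use of the capacity hypothesis. So assume $F$ is bounded. Since $F$ is closed, it is compact. For compact sets, transfinite diameter equals logarithmic capacity, so $\caplog(F)\ge1$. If $\caplog(F)>1$, Proposition~\ref{prop:greater} applies. If $\caplog(F)=1$, Theorem~\ref{thm:main} applies. In either case $\vartheta(F)=0$.

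One subtlety deserves a remark: for unbounded closed sets, the definition of transfinite diameter is usually taken as a limit over the bounded truncations $F\cap\{|z|\le R\}$. The unbounded case never uses this, since Proposition~\ref{prop:unbounded} needs only that $F$ is unbounded.

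The infiniteness hypothesis is also automatic in the bounded case. A finite set has transfinite diameter zero, so boundedness together with transfinite diameter at least one already forces $F$ to be infinite. Hence infiniteness plays no independent role beyond matching the statement of the original problem.

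The argument has no genuine obstacle. The only point needing care is the identification of transfinite diameter with logarithmic capacity for compact sets, which the paper recalls in the introduction. That identification is what lets the hypothesis on $F$ be fed into Theorem~\ref{thm:main} and Proposition~\ref{prop:greater}, both of which are stated in terms of $\caplog$.
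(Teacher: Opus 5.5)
Your proposal is correct and is essentially the paper's own proof. You split into bounded and unbounded cases: a bounded closed set is compact, so Theorem~\ref{thm:main} handles capacity one and Proposition~\ref{prop:greater} handles capacity greater than one, while Proposition~\ref{prop:unbounded} settles the unbounded case without the capacity hypothesis. Your side remarks, that finite sets have transfinite diameter zero and that the unbounded case never uses the definition of transfinite diameter, are accurate.
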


\begin{proof}
If $F$ is bounded, it is compact; apply Theorem~\ref{thm:main} or
Proposition~\ref{prop:greater}. If it is unbounded, use
Proposition~\ref{prop:unbounded}.
\end{proof}

The unbounded argument does not assume the existence of a compact
subset of capacity greater than one. Such an assumption would fail
for an unbounded closed discrete set.

\section{Further remarks}

Theorem~\ref{thm:main} removes the boundary regularity assumption
from the capacity-one theorem of
\cite[Theorem~6]{KLR2025}. The general perturbation strategy comes
from that work, while Sections~\ref{sec:corridor}--\ref{sec:discretization}
provide the arguments needed for arbitrary compact sets. In particular,
the proof controls planar area through $L^1$ convergence of potentials;
it does not require uniform convergence at their logarithmic
singularities.

The order of choices is essential: first the corridor and harmonic
polynomial, then the signed measure, then the positive perturbation
size, and finally the empirical approximation. None of the intermediate
constants is asserted to be uniform as the corridor narrows. Obtaining
degree bounds or explicit root configurations is a separate question.

Thus Corollary~\ref{cor:closed} gives the affirmative answer to the
vanishing question in \cite[Problem~4]{EHP1958}, while the separate
question of determination by transfinite diameter alone has the
negative answer given by the counterexamples in
\cite[Section~3.3]{Feng2026} and \cite[Example~2.1]{GR2026}.

\appendix
\section{A normalization check on the unit circle}

Let $K=\{z:|z|=1\}$, so $H=\overline\D$ and
$d\mu(e^{i\phi})=d\phi/(2\pi)$. For the centered harmonic
polynomial $h(z)=\Rea z$, define
\[
 d\sigma(e^{i\phi})=-2\cos\phi\,\frac{d\phi}{2\pi}.
\]
Then $\sigma(K)=0$ and $|\sigma|\le2\mu$.
For $|z|<1$, expand the logarithm and integrate term by term:
\[
 \int\log|z-e^{i\phi}|\,d\sigma(e^{i\phi})
 =\int\left(-\Rea\sum_{k\ge1}\frac{z^ke^{-ik\phi}}{k}\right)
          (-2\cos\phi)\frac{d\phi}{2\pi}
 =\Rea z.
\]
For $|z|>1$, the exterior expansion similarly gives
$U_\sigma(z)=\Rea(1/z)$. Continuity of the logarithmic single-layer
potential with this smooth density extends the interior equality to
$|z|=1$. In particular, for $0<t<1/2$, the measure
\[
 d\rho_t(e^{i\phi})=(1-2t\cos\phi)\frac{d\phi}{2\pi}
\]
is positive and has potential $t\Rea z$ on the closed disk.
This example checks the sign and factor in the representation lemma;
by itself it does not give arbitrarily small area. That step requires
the harmonic polynomials furnished by Lemma~\ref{lem:corridor}.

\clearpage
\section*{AI assistance and review status}

The AI-assisted proposed result was obtained through the author's
prompting strategy using OpenAI's
GPT-6-Astra model through Codex. AI assistance included proof
exploration, mathematical reasoning, literature checks, manuscript
drafting, and internal reviews by parallel reasoning agents.
These internal checks are not independent human verification.
No proof-assistant formalization or external mathematical endorsement
is asserted in this draft. The bibliography distinguishes the existing
results and method from the proposed extension.

\bibliographystyle{plainnat}
\bibliography{references}
\end{document}